\documentclass[reqno,11pt]{amsart}
\usepackage{hyperref}
\parskip 5pt
\usepackage[latin1]{inputenc}
\usepackage{amsmath,amsthm}
\usepackage{amssymb}
\usepackage{mathabx}
\setlength{\textwidth}{6.5true in}
\setlength{\textheight}{9 truein}
\setlength{\hoffset}{-.5in}
\setlength{\voffset}{-.8in}
\newtheorem{Theorem}{Theorem } [section]
\newtheorem{lemma}[Theorem]{Lemma}

\newtheorem{remark}{Remark}

\numberwithin{equation}{section}


\newcommand{\n}{|\hspace{-.5mm}\|}

\font\ff=cmsy10
\def\tiF{\text{\ff F\kern 0pt}{\;}^{ -1}}
\def\tF{\text{\ff F\kern 0pt}}

\setlength{\parindent} {0in}

\begin{document}
\title[]{A note on the Ostrovsky equation in weighted Sobolev spaces}
\author{Eddye Bustamante, Jos\'e Jim\'enez Urrea and Jorge Mej\'{\i}a}
\thanks{Supported by Universidad Na\-cio\-nal de Co\-lom\-bia-Me\-de\-ll\'in and Colciencias, Colombia, project 111865842951.} 
\subjclass[2000]{35Q53, 37K05}

\keywords{Ostrovsky equation, local well-posedness, weighted Sobolev spaces}
\address{Eddye Bustamante M., Jos\'e Jim\'enez Urrea, Jorge Mej\'{\i}a L. \newline
Departamento de Matem\'aticas\\Universidad Nacional de Colombia\newline
A. A. 3840 Medell\'{\i}n, Colombia}
\email{eabusta0@unal.edu.co, jmjimene@unal.edu.co, jemejia@unal.edu.co}

\begin{abstract}
In this work we consider the initial value problem (IVP) associated to the Ostrovsky equations $$\left. \begin{array}{rl} u_t+\partial_x^3 u\pm \partial_x^{-1}u +u \partial_x u &\hspace{-2mm}=0,\qquad\qquad x\in\mathbb R,\; t\in\mathbb R,\\ u(x,0)&\hspace{-2mm}=u_0(x). \end{array} \right\}$$ We study the well-posedness of the IVP in the weighted Sobolev spaces $$Z_{s,\frac{s}2}:=\{u\in H^s(\mathbb R):D_x^{-s} u\in L^2(\mathbb R)\}\cap L^2(|x|^s dx ),$$ with $\frac34<s\leq 1$.
\end{abstract}

\maketitle

\section{Introduction}

In this article we consider the initial value problem (IVP) associated to the Ostrovsky equations,
\begin{align}
\left. \begin{array}{rl}
u_t+\partial_x^3 u\pm  \partial_x^{-1}u +u \partial_x u &\hspace{-2mm}=0,\qquad\qquad x\in\mathbb R,\; t\in\mathbb R,\\
u(x,0)&\hspace{-2mm}=u_0(x).
\end{array} \right\}\label{O}
\end{align}
The operator $\partial_x^{-1}$ in the equations  denotes a certain antiderivative with respect to the spatial variable $x$ defined through the Fourier transform by $(\partial_x^{-1}f)^{\widehat{}}:=\frac{\widehat{f}(\xi)}{i\xi}$. 

These equations were deducted in \cite{O1978} as a model for weakly nonlinear long waves, in a rotating  frame of reference, to describe the propagation of surface waves in the ocean. The sign of the third term of the equation is related to the type of dispersion. 

Linares and Milan\'es \cite{LM2006} proved that the IVP \eqref{O} for both equations is locally well-posed (LWP) for initial data $u_0$ in Sobolev spaces $H^s(\mathbb R)$, with $s>\frac34$, and such that $\partial_x^{-1}u_0\in L^2(\mathbb R)$. This result was obtained by the use of certain regularizing effects of the linear part of the equation. In \cite{IM2006}  and \cite{IM2007} Isaza and Mej\'\i a used Bourgain spaces and the technique of elementary calculus inequalities, introduced by Kenig, Ponce, and Vega in \cite{KPV1996}, to prove local well-posedness in Sobolev spaces $H^s(\mathbb R)$, with $s>-\frac34$, for both equations. Furthermore Isaza and Mej\'\i a, in \cite{IM2009}, established that the IVP \eqref{O}, for both equations, is not quantitatively well-posed in $H^s(\mathbb R)$ with $s<-\frac34$. Recently, Li et al. in \cite{L2015} proved that the IVP \eqref{O} with the minus sign is LWP in $H^{-3/4}(\mathbb R)$.

In \cite{K1983}, Kato studied the IVP for the generalized KdV equation in several spaces, besides the classical Sobolev spaces. Among them, Kato considered weighted Sobolev spaces.

In this work we will be concerned with the well-posedness of the IVP \eqref{O} in weighted Sobolev spaces. IThis type of spaces arises in a natural manner when we are interested in determining if the Schwartz space is preserved by the flow of the evolution equations in \eqref{O}. These spaces also appear in the study of the persistence in time of the regularity of the Fourier transform of the solutions of the IVP \eqref{O}.

Some relevant nonlinear evolution equations as the KdV equation, the non-linear Schrödinger equation, the Benjamin-Ono equation and the Zakharov-Kuznetsov equation have also been studied in the context of weighted Sobolev spaces (see \cite{FLP2012}, \cite{BJM2015}, \cite{FLP2013}, \cite{I1986}, \cite{I2003}, \cite{J2013}, \cite{N2012}, \cite{NP2009},  \cite{NP2011} and \cite{BJM2016} and references therein).

We will study real valued solutions of the IVP \eqref{O} in the weighted Sobolev spaces
$$Z_{s,r}:=\{u\in H^s(\mathbb R):D_x^{-s} u\in L^2(\mathbb R)\} \cap L^2(|x|^{2r} dx ),$$
with $s,r\in\mathbb R$ (here $D_x^{-s} u$ is defined through the Fourier transform by $(D_x^{-s} u)^{\widehat{}}:=|\xi|^{-s}\widehat{u}$). 

The Ostrovsky equations are perturbations of the KdV equation with the nonlocal term $\pm  \partial_x^{-1}u$. It is interesting to know how this term behaves when we are working in the context of weighted Sobolev spaces.

The relation between the indices $s$ and $r$ for the solutions of the IVP \eqref{O} can be found, following the considerations, contained in the work of Kato (for more details see \cite{BJM2016}): it turns out that the natural weighted Sobolev space to study the IVP \eqref{O} is $Z_{s,s/2}$.

Our aim in this article is to prove that the IVP \eqref{O} is locally well posed (LWP) in $Z_{s,s/2}$ for $\frac34<s\leq 1$. Our method of proof is based on the contraction mapping principle and has two ingredients. First of all, we use the result of local well posedness, obtained by Linares and Milan\'es in $X_s:=\{f\in H^s(\mathbb R):\partial_x^{-1}f\in L^2(\mathbb R)\}$, with $s>\frac34$. The statement of this result is as follows.

\textbf{Theorem A}. \textit{Let $u_0\in X_s$, $s>\frac34$. Then there exist $T=T(\|u_0\|_{H^s})>0$ and a unique solution $u$ of the IVP \eqref{O} such that
\begin{align}
u\in C([0,T];X_s)\,,\label{con1}\\
\|\partial_xu\|_{ L_T^4L_x^{\infty}}<\infty\,,\label{con2}\\
\|D_x^{s}\partial_x u\|_{L_x^{\infty}L_T^2}<\infty\,,\label{con3}\quad\text{and}\\
\|u\|_{L_x^2L_T^{\infty}}<\infty\,.\label{H1}
\end{align}
Moreover, for any $T'\in(0,T)$ there exists a neighborhood $V$ of $u_0$ in $X_s$ such that the map  datum-solution $\tilde{u}_0\mapsto\tilde{u}$ is Lipschitz from $H^s$ into the class defined by \eqref{con1}-\eqref{H1} with $T'$ instead of $T$.}

On the other hand, we need a tool to treat fractional powers of $|x|$. A key idea in this direction is to use a characterization of the generalized Sobolev space
\begin{align}
L^p_b(\mathbb R^n):=(1-\Delta)^{-b/2}L^p(\mathbb R^n),\label{intro2}
\end{align}
due to Stein (see \cite{S1961} and \cite{S1970}) (when $p=2$, $L^2_b(\mathbb R^n)=H^b(\mathbb R^n)$). This characterization is as follows.\\
\textbf{Theorem B}. \textit{Let $b\in(0,1)$ and $2n/(n+2b)\leq p<\infty$. Then $f\in L^p_b(\mathbb R^n)$ if and only if}
\begin{enumerate}
\item[(a)] $f\in L^p(\mathbb R^n)$, \textit{and}
\item[(b)] $\mathcal D^b f(x):=\left( \displaystyle\int_{\mathbb R^n} \dfrac{|f(x)-f(y)|^2}{|x-y|^{n+2b}} dy \right)^{1/2}\in L^p(\mathbb R^n)$,
\end{enumerate}
\textit{with}
\begin{align}
\| f\|_{L^p_b}:=\|(1-\Delta)^{b/2}f \|_{L^p}\simeq \| f\|_{L^p}+\| D^b f\|_{L^p}\simeq \| f\|_{L^p}+\| \mathcal D^b f\|_{L^p}, \label{intro3}
\end{align}
\textit{where $D^b f$ is the homogeneous fractional derivative of order $b$ of $f$, defined through the Fourier transform by}
\begin{align}
(D^b f)^\wedge(\xi) =|\xi|^b \hat f(\xi), \label{intro4}
\end{align}
\textit{($\xi\in\mathbb R^n$ is the dual Fourier variable of $x\in\mathbb R^n$).}\\
From now on we will refer to $\mathcal D^b f$ as the Stein derivative of $f$ of order $b$.\\
As a consequence of Theorem B, Nahas and Ponce proved (see Proposition 1 in \cite{NP2009}) that  for measurable functions $f,g:\mathbb R^n\to \mathbb C$,
\begin{align}
\mathcal D^b(fg) (x)\leq & \| f\|_{L^\infty} (\mathcal D^b g)(x)+|g(x)| \mathcal D^b f(x),\; a.e. \,x\in\mathbb R^n, \, \text{and} \label{intro5}\\
\| \mathcal D^b(fg)\|_{L^2} \leq &\| f\mathcal D^b g \|_{L^2}+ \|g\mathcal D^b f \|_{L^2}.\label{intro6}
\end{align} 
It is unknown whether or not \eqref{intro6} still holds with $D^b$ instead of $\mathcal D^b$.\\
Following a similar procedure to that done by Nahas and Ponce in \cite{NP2009}, in order to obtain a pointwise estimate for $\mathcal D^b(e^{it|x|^2})(x)$ (see Proposition 2 in \cite{NP2009}), we get to bound appropriately $\mathcal D^b(e^{itx^3})(x)$ and $\mathcal D^b(e^{\pm i\frac{t}{x}})(x)$ for $b\in (0,1/2]$ (see Lemmas \ref{lemastein1}  and \ref{lemastein2} in section 2).\\
Using \eqref{intro3} (for $p=2$), \eqref{intro5}, \eqref{intro6} and Lemmas \ref{lemastein1} and  \ref{lemastein2} we deduce an estimate for the weighted $L^2$-norm of the group associated to the linear part of the Ostrovsky equations, $\| |x|^b U_{\pm}(t)f\|_{L^2}$, in terms of $t$, $\||x|^b f\|_{L^2}$, $\| f\|_{H^{2b}}$ and $\|D^{-2b}f\|_{L^2}$ (see Lemma \ref{c2.7} in section 2).\\
This estimate is similar to that, obtained by Fonseca, Linares and Ponce in \cite{FLP2014} (see formulas 1.8 and 1.9 in Theorem 1) for the KdV equation.\\
The linear estimates for the group of the linear part of the Ostrovsky equations, obtained by Linares and Milan\'es in \cite{LM2006}, together with the estimate for the weighted $L^2$-norm of the group, allow us to obtain, by the contraction mapping principle, that the IVP \eqref{O} is LWP in a certain subspace of $Z_{s,s/2}$.

Now we formulate in a precise manner the main result of this article.

\begin{Theorem} \label{maint} Let $3/4<s\leq 1$ and $u_0\in Z_{s,s/2}$ a real valued function. Then there exist $T>0$ and a unique $u$, in a certain subspace $Y_T$ of $C([0,T];Z_{s,s/2})$, solution of the IVP \eqref{O}.
(The definition of the subspace $Y_T$ will be clear in the proof of the theorem).

Moreover, for any $T'\in(0,T)$ there exists a neighborhood $V$ of $u_0$ in $Z_{s,s/2}$ such that the data-solution map $\tilde u_0 \mapsto \tilde u$ from $V$ into $Y_{T'}$ is Lipschitz.

\end{Theorem}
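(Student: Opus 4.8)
The plan is to solve the Duhamel (integral) formulation
\begin{equation*}
u(t)=U_\pm(t)u_0-\int_0^t U_\pm(t-\tau)\,\bigl(u\,\partial_x u\bigr)(\tau)\,d\tau,
\end{equation*}
where $U_\pm(t)$ is the unitary group generated by the linear part of \eqref{O}, and to upgrade the $X_s$-solution furnished by Theorem A to a solution in $Z_{s,s/2}$. Concretely, I would first apply Theorem A to obtain, on some interval $[0,T]$ with $T=T(\|u_0\|_{H^s})$, a solution $u$ enjoying \eqref{con1}--\eqref{H1}; in particular the smoothing and maximal norms $\|\partial_x u\|_{L^4_TL^\infty_x}$, $\|D^s_x\partial_x u\|_{L^\infty_xL^2_T}$ and $\|u\|_{L^2_xL^\infty_T}$ are finite. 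Since $U_\pm(t)$ is a Fourier multiplier of modulus one, the quantity $\|D^{-s}_x u(t)\|_{L^2}$ is transported by the flow with no loss, so the only part of the $Z_{s,s/2}$-norm that requires genuine work is the weight $\|\,|x|^{s/2}u(t)\|_{L^2}$.

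The central step is a weighted a priori estimate. Taking $b=s/2\in(3/8,1/2]$ in Lemma \ref{c2.7} and applying it to each factor of the Duhamel formula gives, after Minkowski's inequality,
\begin{equation*}
\bigl\|\,|x|^{s/2}u(t)\bigr\|_{L^2}\lesssim (1+t)\Bigl(\|\,|x|^{s/2}u_0\|_{L^2}+\|u_0\|_{H^s}+\|D^{-s}_xu_0\|_{L^2}\Bigr)+\int_0^t\bigl\|\,|x|^{s/2}U_\pm(t-\tau)F(\tau)\bigr\|_{L^2}\,d\tau,
\end{equation*}
where $F:=u\,\partial_x u=\tfrac12\partial_x(u^2)$. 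The first group of terms is finite precisely because $u_0\in Z_{s,s/2}$. For the integrand I again invoke Lemma \ref{c2.7} and bound the three resulting quantities: the weight by the elementary pointwise inequality $\|\,|x|^{s/2}F\|_{L^2}\le \|\,|x|^{s/2}u\|_{L^2}\,\|\partial_x u\|_{L^\infty_x}$; the Sobolev norm $\|F\|_{H^s}\lesssim\|u\|_{H^s}^2$ from the algebra property of $H^s$ ($s>\tfrac12$); and the low-frequency norm $\|D^{-s}_xF\|_{L^2}\simeq\|D^{1-s}_x(u^2)\|_{L^2}\lesssim\|u\|_{H^s}^2$, the multiplier $|\xi|^{-s}(i\xi)$ being bounded near the origin so that the singular part of $\partial_x^{-1}$ never acts on the nonlinearity. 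Feeding these back and using that $\tau\mapsto\|\partial_x u(\tau)\|_{L^\infty_x}\in L^1(0,T)$ by \eqref{con2}, Gronwall's inequality closes the estimate and yields $\sup_{[0,T]}\|\,|x|^{s/2}u(t)\|_{L^2}<\infty$, i.e. $u\in L^\infty([0,T];Z_{s,s/2})$.

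Finally I would promote this to $u\in C([0,T];Z_{s,s/2})$ and establish Lipschitz dependence. Continuity of $t\mapsto u(t)$ in $H^s$ and in $\dot H^{-s}$ is already contained in Theorem A and in the unitarity of $U_\pm$; continuity of $t\mapsto\|\,|x|^{s/2}u(t)\|_{L^2}$ follows from the integral equation together with the weighted estimate and dominated convergence, approximating $u_0$ by data with extra decay. The Lipschitz property of the data-to-solution map on a neighborhood $V\subset Z_{s,s/2}$ comes from running the same estimates on the difference of two solutions, where now $F$ is replaced by $u\,\partial_x u-v\,\partial_x v=\tfrac12\partial_x\bigl((u-v)(u+v)\bigr)$: the $X_s$-Lipschitz bound of Theorem A controls the unweighted norms, and the bilinear version of the weighted estimate controls $\|\,|x|^{s/2}(u-v)\|_{L^2}$ via Gronwall. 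The subspace $Y_T$ is then exactly the class of $u\in C([0,T];Z_{s,s/2})$ for which the norms \eqref{con2}--\eqref{H1} and $\sup_{[0,T]}\|\,|x|^{s/2}u\|_{L^2}$ are finite. I expect the main obstacle to be the weighted estimate for the Duhamel term, and behind it Lemma \ref{c2.7} itself: the propagator carries both the dispersive factor $e^{itx^3}$ and the singular rotation factor $e^{\pm it/x}$, and it is the control of the Stein derivative of $e^{\pm it/x}$ near $x=0$ (Lemmas \ref{lemastein1}--\ref{lemastein2}) that forces $b=s/2\le\tfrac12$, hence $s\le 1$. Making the low-frequency contribution of $\partial_x^{-1}$ compatible with the weight, without spoiling the gain in $T$ used to run the fixed point for the unweighted norms, is the delicate point.
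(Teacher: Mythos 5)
Your overall architecture (upgrade the Theorem A solution through the Duhamel formula, Lemma \ref{c2.7} with $b=s/2$, and Gronwall) is viable and genuinely different from the paper's, but it contains a step that fails as written: the bound $\|F\|_{H^s}\lesssim \|u\|_{H^s}^2$ for $F=u\,\partial_x u$ \emph{cannot} follow from the algebra property. The algebra property would give $\|u\,\partial_x u\|_{H^s}\lesssim \|u\|_{H^s}\|\partial_x u\|_{H^s}=\|u\|_{H^s}\|u\|_{H^{s+1}}$, and the right-hand side is not controlled: the nonlinearity loses a full derivative, which is precisely why the local theory for \eqref{O} needs the Kato smoothing effect. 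Even the $L^2$ piece fails pointwise in time when $s<1$, since $\|u\,\partial_x u\|_{L^2}\leq\|u\|_{L^\infty}\|\partial_x u\|_{L^2}$ requires $u(\tau)\in H^1$. The correct replacement --- and it is available inside your own framework, since you already noted that \eqref{con2}, \eqref{con3} and \eqref{H1} are finite for the Theorem A solution --- is the time-integrated bilinear estimate of \cite{LM2006}: by the fractional Leibniz rule (Lemma \ref{l2.4}) and Cauchy--Schwarz in the mixed norms, $\int_0^T\|D^s(u\,\partial_x u)(\tau)\|_{L^2_x}\,d\tau \lesssim T^{1/2}\,\|u\|_{L^2_xL^\infty_T}\,\|D^s\partial_x u\|_{L^\infty_xL^2_T}+\dots$, and similarly $\int_0^T\|u\,\partial_x u(\tau)\|_{L^2_x}\,d\tau\leq T^{3/4}\|u\|_{L^\infty_TL^2_x}\|\partial_x u\|_{L^4_TL^\infty_x}$. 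With these, your function $B(\tau)$ is in $L^1(0,T)$ because of the smoothing and maximal norms, not because $u\in C([0,T];X_s)$, and the Gronwall scheme closes. (Your treatment of the low-frequency term is fine: $\|D^{-s}F\|_{L^2}=\tfrac12\|D^{1-s}(u^2)\|_{L^2}\lesssim\|u^2\|_{H^s}\lesssim\|u\|_{H^s}^2$ does hold for $\tfrac34<s\leq1$, and matches the paper's device $\|D^{-s}f\|_{L^2}\leq\|\partial_x^{-1}f\|_{L^2}+\|f\|_{L^2}$ combined with $\partial_x^{-1}(u\,\partial_x u)=\tfrac12u^2$.)

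A secondary gap: your Gronwall argument applies Lemma \ref{c2.7} to $F(\tau)$, which presupposes $\||x|^{s/2}F(\tau)\|_{L^2}<\infty$, i.e. the very quantity you are trying to prove finite; you must either run the estimate with truncated weights $|x|^{s/2}\wedge N$ uniformly in $N$, or carry out the approximation by decaying data (which you invoke only for time-continuity) already at this stage, using the $X_s$-Lipschitz dependence of Theorem A to pass to the limit. The paper sidesteps both issues at once by running the contraction mapping directly in the weighted space: it adjoins $n_6(v)=\|v\|_{L^\infty_TL^2(|x|^s dx)}$ to the five Linares--Milan\'es norms, estimates $n_6(\Psi(v))$ via Lemma \ref{c2.7} together with the weighted bilinear bound $\int_0^T\||x|^{s/2}v\,\partial_xv\|_{L^2_x}\,dt\leq T^{3/4}\||x|^{s/2}v\|_{L^\infty_TL^2_x}\|\partial_x v\|_{L^4_TL^\infty_x}$ (the same one you use), and closes the fixed point in a ball of the weighted space, so that finiteness of the weighted norm is built in rather than proved a posteriori. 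Your route, once repaired as above, buys a cleaner uniqueness statement (inherited from the larger $X_s$ class of Theorem A) at the cost of the approximation layer; the paper's route is self-contained but only yields uniqueness in the ball $X_T^a$ directly.
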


\begin{remark} When $s>1$ we do not know an interpolation inequality, similar to  that in \cite{NP2009}, but including negative exponents of $(1-\Delta )$. By this reason  we can  not apply the method used in \cite{BJM2016} in order to obtain that the IVP \eqref{O} is LWP in $Z_{s,s/2}$ when $s>1$.\end{remark}

\begin{remark} Local well-posedness results of the IVP \eqref{O} in $H^s(\mathbb R)$ for $s\leq \frac 34$ use the context of Bourgain spaces. It is not clear for us how to handle our weights in those spaces. 
\end{remark}

This article is organized as follows: in section 2 we recall the Leibniz rule for fractional derivatives, deduced by Kenig, Ponce and Vega in \cite{KPV1993}  (subsection 2.1), and we find (subsection 2.2) appropriate estimates for the Stein derivatives of order $b$ in $\mathbb R$ of the functions $e^{itx^3}$ and $e^{\pm i\frac{t}{x}}$ (Lemmas \ref{lemastein1} and \ref{lemastein2}), which have an important consequence (Lemma \ref{c2.7}) that affirms that the weighted Sobolev space $Z_{s,s/2}$ remains invariant by the group. In section 3, we use the results, obtained in section 2, in order to prove Theorem \ref{maint}.

Throughout the paper the letter $C$ will denote diverse constants, which may change from line to line, and whose dependence on certain parameters is clearly established in all cases.

Finally, let us explain the notation for mixed space-time norms. For $f:\mathbb R\times [0,T]\to\mathbb R$ (or $\mathbb C$) we have
$$\|f \|_{L^p_x L^q_{T}}:=\left( \int_{\mathbb R} \left(   \int_0^T |f(x,t)|^q dt  \right)^{p/q} dx \right )^{1/p}.$$
When $p=\infty$ or $q=\infty$ we must do the obvious changes with \textit{essup}. Besides, when in the space-time norm appears $t$ instead of $T$, the time interval is $[0,+\infty)$.

\section{Preliminary Results}

\subsection{Leibniz rule}

In this subsection we recall the Leibniz rule for fractional derivatives, obtained in \cite{KPV1993}.

\begin{lemma}\label{l2.4} (Leibniz rule). Let us consider $0<\alpha<1$ and $1<p<\infty$. Thus
\begin{align*}
\| D^\alpha(fg)-f D^\alpha g-gD^\alpha f \|_{L^p(\mathbb R)}\leq C \|g \|_{L^\infty(\mathbb R)} \| D^\alpha f\|_{L^p(\mathbb R)}.
\end{align*}
\end{lemma}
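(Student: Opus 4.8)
The plan is to regard the left-hand side as a bilinear Fourier multiplier and to read off the symbol that encodes the cancellation. Writing $\hat f,\hat g$ for the Fourier transforms and using $(fg)^{\wedge}(\xi)=\int \hat f(\eta)\hat g(\xi-\eta)\,d\eta$, one checks that the commutator
$$B(f,g):=D^\alpha(fg)-fD^\alpha g-gD^\alpha f$$
is the bilinear operator with symbol
$$\sigma(\eta,\zeta)=|\eta+\zeta|^\alpha-|\eta|^\alpha-|\zeta|^\alpha ,$$
that is $(B(f,g))^{\wedge}(\xi)=\int \sigma(\eta,\xi-\eta)\hat f(\eta)\hat g(\xi-\eta)\,d\eta$. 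The whole reason for subtracting the two ``obvious'' terms is that $\sigma$ vanishes on each coordinate axis, $\sigma(\eta,0)=\sigma(0,\zeta)=0$, and it is precisely this double vanishing that permits one factor to be measured only in $L^\infty$.

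It is instructive to record the pointwise identity behind this symbol. For $0<\alpha<1$ one has the singular integral representation $D^\alpha h(x)=c_\alpha\,\mathrm{p.v.}\int \frac{h(x)-h(y)}{|x-y|^{1+\alpha}}\,dy$, and inserting it into $B(f,g)$ the principal values cancel, leaving the absolutely convergent expression
$$B(f,g)(x)=-c_\alpha\int_{\mathbb R}\frac{(f(x)-f(y))(g(x)-g(y))}{|x-y|^{1+\alpha}}\,dy .$$
This makes the double cancellation transparent, but it is not sufficient on its own: the crude bound $|g(x)-g(y)|\leq 2\|g\|_{L^\infty}$ reduces the estimate to the positive operator $f\mapsto\int|f(x)-f(y)|\,|x-y|^{-1-\alpha}\,dy$, which is strictly larger than $D^\alpha f$ and is not controlled in $L^p$ by $\|D^\alpha f\|_{L^p}$. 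Hence the oscillation of $g$ against the differences of $f$ must be retained, which forces a frequency decomposition.

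The main step is therefore to put the derivative onto $f$ and treat the remainder as an order-zero multiplier. Writing $B(f,g)=T_{\tilde\sigma}(D^\alpha f,g)$ with
$$\tilde\sigma(\eta,\zeta)=\frac{|\eta+\zeta|^\alpha-|\eta|^\alpha-|\zeta|^\alpha}{|\eta|^\alpha},$$
I would verify, by Taylor-expanding $|\eta+\zeta|^\alpha$, that $\tilde\sigma$ is a bounded Coifman--Meyer type symbol, i.e. $|\partial_\eta^a\partial_\zeta^b\tilde\sigma(\eta,\zeta)|\leq C_{a,b}(|\eta|+|\zeta|)^{-(a+b)}$ off the origin. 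Concretely I would split frequency space into the paraproduct regions $|\zeta|\ll|\eta|$, $|\eta|\ll|\zeta|$ and $|\eta|\sim|\zeta|$: in the first two the mean value form $|\eta+\zeta|^\alpha-|\eta|^\alpha=O(|\eta|^{\alpha-1}|\zeta|)$ (respectively with the roles of $\eta,\zeta$ reversed) gives boundedness with the correct derivative decay, while in the high--high region the extra terms $-|\eta|^\alpha$ and $-|\zeta|^\alpha$ are bounded after division by $|\eta|^\alpha\sim|\zeta|^\alpha$ (the derivative bounds there being the delicate point, addressed below). Granting these symbol bounds, the Coifman--Meyer theorem, valid up to the $L^\infty$ endpoint on one factor, or equivalently a hands-on Littlewood--Paley argument summing $\sum_{j,k}T_{\tilde\sigma}(P_jD^\alpha f,\,P_kg)$ with Bernstein's inequality and a square-function estimate, yields $\|T_{\tilde\sigma}(D^\alpha f,g)\|_{L^p}\leq C\|D^\alpha f\|_{L^p}\|g\|_{L^\infty}$ for $1<p<\infty$, which is the claim.

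The hardest point is the endpoint $L^\infty$ on the factor $g$ combined with the high--high interaction $\eta+\zeta\approx 0$, where the derivatives of $|\eta+\zeta|^\alpha$ are singular: one must confirm that, after factoring out $|\eta|^\alpha$, the remaining symbol is still a legitimate order-zero multiplier in this resonant regime, and that the bilinear theorem can be applied when the output of the $L^\infty$ slot is only controlled through a square function or \emph{BMO} before being reabsorbed into $L^p$. In one dimension this can be pushed through directly by Littlewood--Paley, the region $|\eta|\ll|\zeta|$ being the only one producing genuinely low-frequency-$f$ contributions and the only one where the cancellation $\sigma(0,\zeta)=0$ is essential, the remaining regions being standard.
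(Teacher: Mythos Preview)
The paper does not prove this lemma: it is simply quoted from Kenig--Ponce--Vega \cite{KPV1993}, with no argument given. So there is nothing in the paper to compare your proposal against.

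Your outline is essentially the standard route to this estimate, and the overall strategy---identify the bilinear symbol $\sigma(\eta,\zeta)=|\eta+\zeta|^\alpha-|\eta|^\alpha-|\zeta|^\alpha$, exploit its vanishing on the axes, and run a paraproduct/Coifman--Meyer argument after dividing out $|\eta|^\alpha$---is correct in spirit and is how modern treatments proceed. One point deserves more care than you give it. You assert that $\tilde\sigma(\eta,\zeta)=\sigma(\eta,\zeta)/|\eta|^\alpha$ is a Coifman--Meyer symbol, but in the high--high region near the antidiagonal $\eta+\zeta=0$ the derivatives of $|\eta+\zeta|^\alpha$ blow up like $|\eta+\zeta|^{\alpha-1}$, which is \emph{not} dominated by $(|\eta|+|\zeta|)^{-1}$; the CM condition $|\partial_\eta^a\partial_\zeta^b\tilde\sigma|\lesssim(|\eta|+|\zeta|)^{-(a+b)}$ genuinely fails there. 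You flag this yourself as ``the hardest point'' but do not resolve it. The fix is exactly the hands-on Littlewood--Paley alternative you mention in passing: on the resonant piece $P_jf\cdot P_{j'}g$ with $|j-j'|\le 2$, the output frequency is $\lesssim 2^j$, so $D^\alpha$ contributes at most $2^{j\alpha}$, which one transfers to $P_jf$ as $\sim D^\alpha P_jf$ and then sums via the square function in $L^p$ against $\|g\|_{L^\infty}$. With that region handled separately, the remaining low--high and high--low paraproduct pieces do satisfy CM-type bounds and your argument closes. This is, up to presentation, the proof in \cite{KPV1993}.
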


\subsection{Stein derivative}

In this subsection, we obtain in Lemmas \ref{lemastein1} and \ref{lemastein2}  appropriate bounds for $\mathcal D^b(e^{itx^3})(x)$ and $\mathcal D^b(e^{\pm i\frac{t}{x}})(x)$, respectively. Then, using properties \eqref{intro5} and \eqref{intro6} of the Stein derivative and these Lemmas , we succeed, in Lemma \ref{c2.7}, to bound in  an adequate manner the weighted $L^2$-norm $\||x|^b U_{\pm}(t) f\|_{L^2_x}$, for the unitary groups $\{U_{\pm}(t)\}_{t\in\mathbb R}$ associated to the linear part of the Ostrovsky equations, i.e. $[U_{\pm}(t)f](x)$ is the solution of the PVI
\begin{align}
\left. \begin{array}{rl}
u_t+\partial_x^3 u\pm  \partial_x^{-1}u  &\hspace{-2mm}=0,\\
u(x,0)&\hspace{-2mm}=f(x),
\end{array} \right\}\label{OL}
\end{align}
and is given by
\begin{align}
[U_{\pm}(t)f](x)=\dfrac 1{\sqrt{2\pi}}\int_{\mathbb R} e^{i[t(\xi^3\pm\frac1{\xi})+x\xi]} \widehat f(\xi) d\xi .\label{group}
\end{align}
In the proofs of Lemmas \ref{lemastein1} and \ref{lemastein2} we will use repeatedly the following inequalities:
$$|e^{i\theta}-1|\leq 2\quad\text{and}\quad |e^{i\theta}-1|\leq |\theta|,\quad \text{for $\theta\in\mathbb R$}.$$
\begin{lemma}\label{lemastein1} Let $b\in (0,1)$. There exists a constant $C_b>0$ such that for any $t>0$ and $x\in\mathbb R$,
\begin{align*}
\mathcal D^b(e^{itx^3})(x)\leq C_b \left( t^{b/3} +t^{\frac13+\frac{2b}9} +(t^{\frac13+\frac{2b}3} +t^{\frac{2b}3}) |x|^{2b}\right).
\end{align*}
\end{lemma}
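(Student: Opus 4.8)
The plan is to reduce the estimate to one-parameter ``model'' oscillatory integrals by peeling off the spatial phase one monomial at a time. Writing the Stein derivative as an $L^2_y$ norm and factoring out the unimodular factor $e^{ity^3}$, we have $\mathcal D^b(e^{itx^3})(x) = \big\| |x-y|^{-(1+2b)/2}\,(e^{it(x^3-y^3)}-1)\big\|_{L^2_y}$. Substituting $u=x-y$ and expanding $x^3-y^3 = 3x^2u - 3xu^2 + u^3$ puts the phase into the form $t(3x^2u) + t(-3xu^2) + t u^3$, a sum of a linear, a quadratic and a cubic monomial in $u$ whose coefficients carry all of the $x$- and $t$-dependence.

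The key step is the elementary bound $|e^{i(\phi_1+\phi_2+\phi_3)}-1|\le \sum_{j}|e^{i\phi_j}-1|$ (which follows from $|e^{i(a+b)}-1|\le|e^{ia}-1|+|e^{ib}-1|$ together with $|e^{i\theta}|=1$), followed by Minkowski's inequality in $L^2_u$. This dominates $\mathcal D^b(e^{itx^3})(x)$ by $\sqrt{J_1}+\sqrt{J_2}+\sqrt{J_3}$, where $J_k = \int_{\mathbb R}|u|^{-(1+2b)}\,|e^{i\phi_k}-1|^2\,du$ with $\phi_1 = 3tx^2u$, $\phi_2 = -3txu^2$, $\phi_3 = tu^3$. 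Each $J_k$ has a pure monomial phase $c\,u^m$ and is therefore scale-homogeneous: the substitution $v=|c|^{1/m}u$ gives $J_k = c_{b,m}\,|c|^{2b/m}$, where $c_{b,m}=\int_{\mathbb R}|v|^{-(1+2b)}\,|e^{\pm i v^m}-1|^2\,dv$. Using $|e^{i\theta}-1|\le\min(2,|\theta|)$ one checks these constants are finite exactly for $b\in(0,1)$: the far-field pieces need $b>0$, while the linear case $m=1$, whose integrand behaves like $|v|^{1-2b}$ near the origin, is precisely what forces $b<1$. This yields $\sqrt{J_1}\le C t^{b}|x|^{2b}$, $\sqrt{J_2}\le C(t|x|)^{b/2}$ and $\sqrt{J_3}\le C t^{b/3}$.

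It then remains to reconcile the clean bound $C\big(t^b|x|^{2b}+(t|x|)^{b/2}+t^{b/3}\big)$ with the stated one, which I would do with two elementary inequalities: for $0<b\le 1$ and all $t>0$ one has $t^b\le t^{2b/3}+t^{1/3+2b/3}$ (compare the exponents separately for $t\le 1$ and $t\ge 1$), which absorbs the first term, and $(t|x|)^{b/2}=(t^{b/3})^{3/4}(t^b|x|^{2b})^{1/4}\le \tfrac34\, t^{b/3}+\tfrac14\, t^b|x|^{2b}$ by Young's inequality, which reduces the mixed term to the previous two. This already yields the stated estimate, with the $t^{1/3+2b/9}$ summand appearing only as slack. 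I expect the only genuine pitfall to be at this last stage rather than in the analysis: one must resist bounding the quadratic contribution by the crude $|x|^{b/2}\le 1+|x|^{2b}$, which manufactures a term $t^{b/2}|x|^{2b}$ that is too large for small $t$; the correct move is the interpolated Young inequality above. A more hands-on alternative, which reproduces the authors' exponents directly, is to skip the monomial splitting and instead cut the $u$-integral at a radius $\rho$, estimating $|e^{it(x^3-y^3)}-1|$ by $t|x^3-y^3|$ for $|u|\le\rho$ and by $2$ for $|u|>\rho$, then optimizing $\rho$ separately against the $x^2u$- and $u^3$-driven parts of the phase; there the delicate point is choosing the cutoffs so that the $x^2u$ term does not generate an $x^4$ contribution.
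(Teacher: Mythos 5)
Your proof is correct, and it takes a genuinely different route from the paper's. The paper first rescales $w=t^{1/3}(x-y)$ to normalize the cubic term and then runs a case analysis over regions determined by the competition between $t^{1/3}|x|$ and $(t^{1/3}x^2)^{-1}$ (three sets $E_1,E_2,E_3$, with further subcases inside $E_2$ and $E_3$), bounding $|e^{i\theta}-1|$ by $\min(2,|\theta|)$ region by region; the extra summand $t^{1/3+2b/9}$ in the statement is an artifact of that bookkeeping. You instead use the subadditivity $|e^{i(\phi_1+\phi_2+\phi_3)}-1|\le\sum_j|e^{i\phi_j}-1|$ to decouple the three monomials of the phase $3x^2u-3xu^2+u^3$, after which each piece is an exactly scale-homogeneous integral $J_k=c_{b,m}|c|^{2b/m}$, with $c_{b,m}$ finite precisely for $b\in(0,1)$ (the linear monomial forces $b<1$, the tails force $b>0$) --- I verified the scaling exponents $(1+2b)/m-1/m=2b/m$ and all three resulting powers $t^b|x|^{2b}$, $(t|x|)^{b/2}$, $t^{b/3}$. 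Your closing reductions also check out: $t^b\le t^{2b/3}+t^{1/3+2b/3}$ holds for $0<b\le 1$ by comparing exponents on $t\le 1$ and $t\ge 1$, and the weighted Young step $(t|x|)^{b/2}=(t^{b/3})^{3/4}\bigl(t^b|x|^{2b}\bigr)^{1/4}\le\tfrac34\,t^{b/3}+\tfrac14\,t^b|x|^{2b}$ is exact (the exponents $t^{b/4}\cdot t^{b/4}=t^{b/2}$ and $|x|^{b/2}$ match), and you rightly avoid the crude $|x|^{b/2}\le 1+|x|^{2b}$. The net effect is that your argument is shorter, requires no case splitting, and proves the strictly stronger bound $C_b\bigl(t^{b/3}+(t^{2b/3}+t^{1/3+2b/3})|x|^{2b}\bigr)$, showing the $t^{1/3+2b/9}$ term in the lemma is superfluous; since the lemma only claims an upper bound, this is fine, and indeed Lemma 2.4 of the paper only consumes these powers in the crude aggregate $(1+T^{1/3+s/3})$, so nothing downstream is affected. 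What the paper's region-by-region method buys in exchange is generality: it is the Nahas--Ponce template that the authors reuse verbatim for the non-polynomial phase $e^{\pm it/x}$ in Lemma 2.3, where your monomial decoupling is unavailable.
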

\begin{proof}
After the change of variables $w:=t^{1/3}(x-y)$ we have that
\begin{align}
\notag \mathcal D^b(e^{itx^3})(x)&=\left(\int_{\mathbb R} \dfrac{|e^{itx^3}-e^{ity^3}|^2}{|x-y|^{1+2b}}dy \right)^{1/2}\\
&=t^{b/3}\left(\int_{\mathbb R} \dfrac{|e^{i(-3x^2 t^{2/3}w_1+3xt^{1/3}w^2-w^3)}-1|^2}{|w|^{1+2b}}dw \right)^{1/2}\equiv t^{b/3} I. \label{st0}
\end{align}
Let us observe that
$$|i(-3x^2t^{2/3}w+3xt^{1/3}w^2-w^3)|\leq |w|(3x^2 t^{2/3}+3|x|t^{1/3}|w|+w^2).$$
In consequence, for $w$ such that $3x^2 t^{2/3}>3|x|t^{1/3}|w|$, i.e. for $w$ such that $|x|t^{1/3}>|w|$, it follows that
\begin{align}
\notag |-3x^2 t^{2/3}w+3x t^{1/3}w^2-w^3|&\leq |w| (6x^2t^{2/3}+|w|^2)\\
&\leq |w|(6x^2 t^{2/3}+x^2 t^{2/3})\leq 7x^2 t^{2/3}|w|.\label{st1}
\end{align}
In order to estimate $I$ we split the $\mathbb R$ line in three sets $E_i$, $i=1,2,3$.

First, we define
$$E_2:=\{w:|w|<t^{1/3}|x|,\; |w|<(t^{1/3}x^2)^{-1} \},$$
and we estimate 
$$\left(\int_{E_2} \dfrac{|e^{i(-3x^2 t^{2/3}w+3t^{1/3}w^2-w^3)}-1|^2}{|w|^{1+2b}}dw \right)^{1/2}.$$
Two cases will be consider to estimate this integral.

\textit{Case 2.1.} $t^{1/3}|x|\leq t^{-1/3} x^{-2}$.

In this case, taking into account \eqref{st1}, we have
\begin{align}
\notag \left(\int_{E_2} \dfrac{|e^{i(-3x^2 t^{2/3}w+3xt^{1/3}w^2-w^3)}-1|^2}{|w|^{1+2b}}dw \right)^{1/2}&\leq C x^2 t^{2/3} \left(  \int_0^{t^{1/3}|x|} \dfrac{w^2}{w^{1+2b}}dw \right)^{1/2}\\
\notag &=C x^2 t^{2/3} \left( \dfrac{   (t^{1/3} |x|)^{2-2b}}   {2-2b} \right)^{1/2}\\
 &= C_b |x|^{3-b} t^{1-b/3}\leq C_b t^{1/3-b/9},\label{st1b}
\end{align}
where in the last inequality the condition $|x|^3< t^{-2/3}$ was used.

\textit{Case 2.2.} $t^{1/3}|x|> t^{-1/3} x^{-2}$.

A simple calculation shows that
\begin{align}
\notag \left(\int_{E_2} \dfrac{|e^{i(-3x^2 t^{2/3}w+3xt^{1/3}w^2-w^3)}-1|^2}{|w|^{1+2b}}dw \right)^{1/2}&\leq C x^2 t^{2/3} \left(\int_0^{t^{-1/3}x^{-2}} w^{1-2b} dw \right)^{1/2}\\
&\leq C_b t^{1/3+b/3} |x|^{2b}. \label{st1c}
\end{align}
From \eqref{st1b} and \eqref{st1c} we have that
\begin{align}
\left(\int_{E_2} \dfrac{|e^{i(-3x^2 t^{2/3}w+3xt^{1/3}w^2-w^3)}-1|^2}{|w|^{1+2b}}dw \right)^{1/2}\leq C (t^{1/3-b/9}+t^{1/3+b/3}|x|^{2b}).\label{st2}
\end{align}
For the set
$$E_1:=\{ w: |w|> (t^{1/3} x^2)^{-1}\},$$
one has
\begin{align}
\notag&\left(\int_{E_1} \dfrac{|e^{i(-3x^2 t^{2/3}w+3xt^{1/3}w^2-w^3)}-1|^2}{|w|^{1+2b}}dw \right)^{1/2}\leq 2\left(\int_{E_1} \dfrac 1{|w|^{1+2b}} dw \right)^{1/2}\\
&\leq C \left( \int_{(t^{1/3}x^2)^{-1}}^\infty \dfrac 1{w^{1+2b}} dw \right)^{1/2}\leq C_b (t^{-1/3} x^{-2})^{-b}=C_b t^{b/3}|x|^{2b}.\label{st3}
\end{align}
From \eqref{st2} and \eqref{st3}, if $\min\{t^{1/3} |x|,(t^{1/3}x^2)^{-1}\}=(t^{1/3}x^2)^{-1}$, we obtain that
\begin{align}
\left(\int_{\mathbb R} \dfrac{|e^{i(-3x^2 t^{2/3}w+3xt^{1/3}w^2-w^3)}-1|^2}{|w|^{1+2b}}dw \right)^{1/2}\leq C_b [t^{1/3-b/9}+(t^{1/3+b/3}+t^{b/3})|x|^{2b}].\label{st4}
\end{align}
Now we consider the case $\min\{t^{1/3} |x|,(t^{1/3}x^2)^{-1}\}=t^{1/3}|x|$, i.e. $|x|^3 t^{2/3}<1$, and for that purpose we define
$$E_3:=\{ w: t^{1/3} |x|<|w|< (t^{1/3} x^2)^{-1}\}.$$
In order to estimate
$$\left(\int_{E_3} \dfrac{|e^{i(-3x^2 t^{2/3}w+3xt^{1/3}w^2-w^3)}-1|^2}{|w|^{1+2b}}dw \right)^{1/2},$$
we need to consider three cases.

\textit{Case 3.1.} $1<t^{1/3}|x|$.

For this case we note that
\begin{align}
\notag \left(\int_{E_3} \dfrac{|e^{i(-3x^2 t^{2/3}w+3xt^{1/3}w^2-w^3)}-1|^2}{|w|^{1+2b}}dw \right)^{1/2} \leq &C \left( \int_{E_3} \dfrac 1{|w|^{1+2b}} \right)^{1/2}\leq C\left( \int_{t^{1/3}|x|}^{(t^{1/3}x^2)^{-1}} \dfrac 1{w^{1+2b}} dw\right)^{1/2}\\
\notag &= C_b [(t^{1/3}|x|)^{-2b}-(t^{1/3}x^2)^{2b}]^{1/2}\leq C_b (t^{1/3}|x|)^{-b}\\
&\leq C_b.\label{st4a}
\end{align}
\textit{Case 3.2.} $t^{1/3}|x|<1<(t^{1/3}x^2)^{-1}$.

Let us observe that for $|w|<1$,
\begin{align}
|w(-3x^2t^{2/3}+3xt^{1/3}w-w^2)|\leq |w| (3+3|w|+w^2)\leq C|w|,\label{st8as}
\end{align}
and then
\begin{align}
\notag\left(\int_{E_3} \dfrac{|e^{i(-3x^2 t^{2/3}w+3xt^{1/3}w^2-w^3)}-1|^2}{|w|^{1+2b}}dw \right)^{1/2} &\leq C \left(  \int_{t^{1/3}|x|}^1 \dfrac{ w^2}{w^{1+2b}} dw +   \int_1^{(t^{1/3}x^2)^{-1}} \dfrac 1{w^{1+2b}} dw \right)^{1/2}\\
&= C_b [1-(t^{1/3}|x|)^{2-2b}+1-(t^{1/3}x^2)^{2b}]^{1/2}\leq C_b.\label{st8a}
\end{align}
\textit{Case 3.3.} $(t^{1/3}x^2)^{-1}<1$.

In this final case we obtain, using \eqref{st8as},
\begin{align}
\notag \left(\int_{E_3} \dfrac{|e^{i(-3x^2 t^{2/3}w+3xt^{1/3}w^2-w^3)}-1|^2}{|w|^{1+2b}}dw \right)^{1/2} &\leq  C \left(  \int_{t^{1/3}|x|}^{(t^{1/3}x^2)^{-1}} \dfrac{w^2}{w^{1+2b}}dw \right)^{1/2}\leq C_b [(t^{1/3}x^2)^{-1}]^{1-b}\\
&\leq C_b.\label{st12a}
\end{align}
Consequently, from \eqref{st4a}, \eqref{st8a} and \eqref{st12a}, in any case, for $b\in(0,1)$,
\begin{align}
\left(\int_{E_3} \dfrac{|e^{i(-3x^2 t^{2/3}w+3xt^{1/3}w^2-w^3)}-1|^2}{|w|^{1+2b}}dw \right)^{1/2}\leq C_b. \label{st17}
\end{align}
Summarizing, estimates \eqref{st0}, \eqref{st4} and \eqref{st17} imply that, for $b\in(0,1)$,
\begin{align*}
\notag \mathcal D^b(e^{itx^3})(x) & \leq C_b t^{b/3}(1+t^{1/3-b/9}+(t^{1/3+b/3}+t^{b/3})|x|^{2b})\\
& \leq C_b [t^{b/3}+t^{1/3+2b/9}+(t^{1/3+2b/3}+t^{2b/3})|x|^{2b}]
\end{align*}
Lemma \ref{lemastein1} is proved.
\end{proof}
\begin{remark} Lemma \ref{lemastein1} can be used in order to obtain well-posedness results for the IVP associated to the KdV equation in weighted Sobolev spaces. 
\end{remark}
\begin{lemma}\label{lemastein2} Let $b\in (0,\frac12]$. There exists a constant $C_b>0$ such that for any $t>0$ and $x\ne 0$,
\begin{align*}
\mathcal D^b(e^{\pm i\frac{t}{x}})(x)\leq C_b\frac{t^b}{|x|^{2b}} .
\end{align*}
\end{lemma}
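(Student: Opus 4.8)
The plan is to remove the parameters $t$ and $x$ by a scaling argument and thereby reduce the whole lemma to a single scalar inequality. First I would note that since $|e^{\pm it/x}-e^{\pm it/y}|=|e^{\pm it(1/x-1/y)}-1|$, and this modulus is unchanged both under complex conjugation (which swaps the two signs) and under the simultaneous reflection $x\mapsto -x$, $y\mapsto -y$, it suffices to treat $x>0$ with the $+$ sign. Then, writing out
\[
\mathcal D^b(e^{it/x})(x)^2=\int_{\mathbb R}\frac{|e^{it/x}-e^{it/y}|^2}{|x-y|^{1+2b}}\,dy
\]
and substituting $y=xs$, together with $\tau:=t/x>0$, I get the exact identity $\mathcal D^b(e^{it/x})(x)=x^{-b}J(\tau)^{1/2}$, where
\[
J(\tau):=\int_{\mathbb R}\frac{|e^{i\tau(1-1/s)}-1|^2}{|1-s|^{1+2b}}\,ds .
\]
Because the target bound equals $C_b\,t^b x^{-2b}=C_b\,\tau^{b}x^{-b}$, the lemma becomes \emph{equivalent} to the single estimate $J(\tau)\le C_b\,\tau^{2b}$ for all $\tau>0$, which I would prove using only the two bounds $|e^{i\theta}-1|\le\min\{2,|\theta|\}$ recalled before the lemma.

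To bound $J(\tau)$ I would split $\mathbb R$ according to the two singularities of the integrand: $s=1$, where the denominator vanishes, and $s=0$, where the phase $\tau(1-1/s)$ blows up; the remaining compact and unbounded pieces are harmless. Near $s=1$ I set $v=s-1$ and use $1-1/s=v/(1+v)\sim v$, so this part is comparable to $\int \min\{4,\tau^2 v^2\}\,|v|^{-1-2b}\,dv$; splitting at $|v|\sim 1/\tau$ gives $\tau^2\int_0^{1/\tau}v^{1-2b}\,dv\simeq\tau^{2b}$ (using $b<1$) for the inner part and $\int_{1/\tau}^\infty 4\,v^{-1-2b}\,dv\simeq\tau^{2b}$ for the outer part. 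On the tails $|s|\to\infty$ the phase tends to the constant $\pm\tau$, so the integrand is $\le C\min\{4,\tau^2\}\,|s|^{-1-2b}$, contributing $\le C_b\min\{4,\tau^2\}\le C_b\tau^{2b}$ (distinguishing $\tau\le1$ from $\tau>1$); the compact regions bounded away from $s=0,1$ are controlled by the same $\min\{4,\tau^2\}$ bound.

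The hard part will be the region near $s=0$, and this is exactly where the hypothesis $b\le\tfrac12$ becomes essential. There $|1-s|^{1+2b}$ is bounded above and below by positive constants while the phase is $\approx-\tau/s$, so the contribution is comparable to $\int_{|s|<1/2}\min\{4,\tau^2/s^2\}\,ds$. Splitting at $|s|\sim\tau$, the part $|s|\gtrsim\tau$ gives $\int \tau^2 s^{-2}\,ds=O(\tau)$, while on $|s|\lesssim\tau$ the phase is $O(1)$ or larger and only the trivial bound $|e^{i\theta}-1|\le2$ is available, so this set of measure $\sim\tau$ also contributes $\sim\tau$. Hence for $\tau\le1$ the $s=0$ region contributes $\le C\tau$ (for $\tau>1$ it is a bounded constant, $\le C\tau^{2b}$). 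The final, decisive step is to absorb this linear term into $\tau^{2b}$: for $\tau\le1$ one has $\tau=\tau^{2b}\,\tau^{1-2b}\le\tau^{2b}$ \emph{precisely} when $1-2b\ge0$, i.e. $b\le\tfrac12$. This is the only place the restriction $b\le\tfrac12$ is used, and it reflects a genuine obstruction: the oscillation $e^{\mp it/x}$ near its singularity cannot be improved beyond the trivial bound. Collecting the four regions yields $J(\tau)\le C_b\tau^{2b}$, and undoing the scaling gives the stated estimate.
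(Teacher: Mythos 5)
Your proof is correct, and it takes a genuinely different route from the paper's. Your scaling identity $\mathcal D^b(e^{it/x})(x)=x^{-b}J(\tau)^{1/2}$, $\tau=t/x$, is exact, so the lemma is indeed equivalent to $J(\tau)\le C_b\,\tau^{2b}$, and your four-region analysis checks out: near $s=1$ the bound $|1-1/s|\le 2|s-1|$ (valid for $\tfrac12\le s\le\tfrac32$) with the split at $|s-1|\sim 1/\tau$ gives $C_b\tau^{2b}$; the tails and the compact sets away from $s=0,1$ give $C\min\{4,\tau^2\}\le C\tau^{2b}$; and near $s=0$ the trivial bound on the set $\{|s|\lesssim\tau\}$ of measure $O(\tau)$, together with $\int_\tau^{1/2}\tau^2 s^{-2}\,ds=O(\tau)$, gives $C\min\{\tau,1\}\le C\tau^{2b}$, the one step that consumes $b\le\tfrac12$. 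The paper, by contrast, keeps both parameters and splits into the cases $t/x\le 6\pi$ and $t/x>6\pi$ (the latter handled by the inversion $w'=1/w$ and the sets $F_1,\dots,F_4$); near the phase singularity (its sets $E_4$--$E_6$) it runs a genuine oscillation argument, constructing the sequence $w_n$ on which the phase advances by $2\pi$, subtracting $2(n-1)\pi$ interval by interval and summing the resulting series, which lands on the intermediate bound $Ct^{1/2}/x^{1/2+b}$, converted to $Ct^b/x^{2b}$ via $(t/x)^{1/2-b}\le C$ --- the paper's use of $b\le\tfrac12$. The instructive point is that your measure-theoretic shortcut yields a contribution $\lesssim\tau$ to $J$ near $s=0$, i.e.\ exactly the same $t^{1/2}/x^{1/2+b}$ after unscaling, so for the stated estimate the paper's periodicity machinery buys nothing quantitatively, and both proofs spend the hypothesis $b\le\tfrac12$ at the same place (your $\tau\le\tau^{2b}$ for $\tau\le1$ is the paper's $(t/x)^{1/2-b}\le C$); moreover your reduction to the single parameter $\tau$ absorbs the paper's entire second case, since for $\tau\ge1$ everything is controlled by $\min\{4,\tau^2\}\le 4\tau^{2b}$ and the $s\sim1$ computation. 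What your approach buys is brevity and uniformity; what the paper's buys is a template for extracting oscillation near the singularity, which could matter for sharper or differently weighted estimates, though here it improves nothing. Two cosmetic remarks: as $s\to\pm\infty$ the phase tends to $+\tau$ in both tails (not $\pm\tau$), which is harmless since you only use $|1-1/s|\le\tfrac32$ there; and your closing claim that the oscillation near the singularity ``cannot be improved beyond the trivial bound'' is an unproved heuristic --- the comparison above supports it at the level of these bounds, but the lemma does not require it.
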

\begin{proof}
We only consider the case with the minus sign, being the other one similar. Without loss of generality we suppose $x>0$. We need to consider two cases.

\textit{Case 1.} $0<t/x\leq 6 \pi$.

After the change of variables $w:=x-y$ we obtain
$$\mathcal D^b(e^{- i\frac{t}{x}})(x)= \left(\int_{\mathbb R} \dfrac{|e^{i f(w)}-1|^2}{|w|^{1+2b}} dw \right)^{1/2},$$
where $f(w)=\dfrac tx \left(\dfrac w{w-x}\right)$. Let us define
$$E_1:=\{w:|w|\leq \frac x2 \}.$$
If $w\in E_1$, then $|w-x|\geq \frac x2$. Thus
$$|f(w)|=\dfrac tx \left(\dfrac {|w|}{|w-x|}\right)\leq \dfrac {2t}{x^2}|w|,$$
and
\begin{align}
\notag \left(\int_{E_1} \dfrac{|e^{i f(w)}-1|^2}{|w|^{1+2b}} dw \right)^{1/2}&\leq \left(\int_{E_1} \dfrac{|f(w)|^2}{|w|^{1+2b}} dw \right)^{1/2}\leq 2 \dfrac t{x^2} \left( \int_{-x/2}^{x/2} \dfrac{w^2}{|w|^{1+2b}} dw \right)^{1/2}\\
&\leq C_b \dfrac t{x^2} x^{1-b}=C_b\dfrac{t^b}{x^{2b}}\left(\dfrac{t^{1-b}}{x^{1-b}}\right)\leq C_b\dfrac{t^b}{x^{2b}}.\label{l2.3.1}
\end{align}
Let us observe that $f(-\frac x2)=\frac 13 \frac tx$. Hence, for $w\leq -\frac x2$, $\frac 13\frac tx\leq f(w)<\frac tx$.\\
We define now the set
$$E_2:=\{w:w\leq -\frac x2 \}.$$
Then
\begin{align}
\notag \left(\int_{E_2} \dfrac{|e^{i f(w)}-1|^2}{|w|^{1+2b}} dw \right)^{1/2}&\leq \left(\int_{E_2} \dfrac{|f(w)|^2}{|w|^{1+2b}} dw \right)^{1/2}\leq \dfrac t{x} \left( \int_{-\infty}^{-x/2} \dfrac{1}{|w|^{1+2b}} dw \right)^{1/2}\\
&\leq C_b \dfrac t{x} x^{-b}=C_b\dfrac{t^b}{x^{2b}}\left(\dfrac{t^{1-b}}{x^{1-b}}\right)\leq C_b\dfrac{t^b}{x^{2b}}.\label{l2.3.2}
\end{align}
Taking into account that $f(\frac 32x)=3\frac tx$, then, for $w\geq \frac 32x$, $\frac tx<f(w)\leq 3\frac tx$.\\
Let us define
$$E_3:=\{ w:w\geq \frac 32x\}.$$
Then, in a similar way as it was done in the estimation over the set $E_2$, we obtain
\begin{align}
\left(\int_{E_3} \dfrac{|e^{i f(w)}-1|^2}{|w|^{1+2b}} dw \right)^{1/2}\leq C_b \dfrac {t^b}{x^{2b}}.\label{l2.3.3}
\end{align}
Let us consider the sequence $\{ w_n\}$ such that $f(w_n)=3\frac tx+2(n-1)\pi$. This is a decreasing sequence such that $w_1=\frac 32 x$,
$$w_n=\dfrac{3t+2(n-1)\pi x}{2\dfrac tx+2(n-1)\pi},$$
and $\underset{n\to\infty}{\lim} w_n=x$. For $n\geq 1$ and $w\in (w_{n+1},w_n)$ we have that $f(w_n)<f(w)<f(w_{n+1})$, i.e.,
$$3\dfrac tx+2(n-1)\pi<f(w)<3\dfrac tx+2n\pi.$$
From these inequalities, it is easy to see that
$$f(w)-2(n-1)\pi<\dfrac{3\dfrac tx+2\pi}{3\dfrac tx+2(n-1)\pi}f(w).$$
Let us define
$$E_4:=\{w:x<w<w_2 \}.$$
Taking into account that
$$\int_{w_{n+1}}^{w_n} \dfrac 1{(w-x)^2}dw=\frac{2\pi}t,$$
it follows that
\begin{align}
\notag& \left(\int_{E_4} \dfrac{|e^{i f(w)}-1|^2}{|w|^{1+2b}} dw \right)^{1/2}\\
\notag&=\left(\sum_{n=2}^{\infty}\int_{w_{n+1}}^{w_n} \dfrac{|e^{i(f(w)-2(n-1)\pi)}-1|^2}{|w|^{1+2b}}dw\right)^{1/2}\leq \left(\sum_{n=2}^{\infty}\int_{w_{n+1}}^{w_n} \dfrac{|(f(w)-2(n-1)\pi)|^2}{|w|^{1+2b}}dw\right)^{1/2}\\
\notag&\leq \left(\sum_{n=2}^\infty \dfrac{(3\dfrac tx+2\pi)^2}{(3\dfrac tx+2(n-1)\pi)^2} \int_{w_{n+1}}^{w_n}\dfrac{f(w)^2}{|w|^{1+2b}}dw\right)^{1/2}\leq C \left( \sum_{n=2}^\infty\dfrac 1{(n-1)^2}\dfrac {t^2}{x^2}\int_{w_{n+1}}^{w_n}\dfrac {w^2}{w^{1+2b}(w-x)^2}dw\right)^{1/2}\\
\notag&\leq C \dfrac tx  \left( \sum_{n=2}^\infty\dfrac 1{(n-1)^2}\int_{w_{n+1}}^{w_n}\dfrac {w^{1-2b}}{(w-x)^2}dw\right)^{1/2}\leq C\frac tx \left(\sum_{n=2}^\infty \dfrac 1{(n-1)^2} w_n^{1-2b}\int_{w_{n+1}}^{w_n} \dfrac{dw}{(w-x)^2}\right)^{1/2}\\
&\leq C\dfrac tx x^{1/2-b} \left( \sum_{n=2}^\infty \dfrac 1{(n-1)^2} \dfrac{2\pi}t\right)^{1/2}=C\dfrac tx x^{1/2-b}\dfrac 1{t^{1/2}}=C\dfrac{t^{1/2}}{x^{1/2+b}}=C\dfrac{t^b}{x^{2b}}\dfrac{t^{1/2-b}}{x^{1/2-b}}\leq C\dfrac{t^b}{x^{2b}}.\label{l2.3.4}
\end{align}
Let us define
$$E_5:=\{w:w_2\leq w<w_1=\frac 32x \}.$$
Since for $w\in(w_2,w_1)$, $w\sim x$, we have
\begin{align}
\notag\left(\int_{E_5} \dfrac{|e^{i f(w)}-1|^2}{|w|^{1+2b}} dw \right)^{1/2}&\leq\left(\int_{w_2}^{w_1} \dfrac{f(w)^2}{w^{1+2b}} dw\right)^{1/2}\leq \dfrac tx \left(\int_{w_2}^{w_1} \dfrac{w^2}{w^{1+2b}(w-x)^2} \right)^{1/2}\\
\notag&\leq \dfrac tx \left(\int_{w_2}^{w_1} \dfrac{w^{1-2b}}{(w-x)^2} \right)^{1/2}\leq C\dfrac tx x^{1/2-b} \left(\int_{w_2}^{w_1} \dfrac 1{(w-x)^2} dw \right)^{1/2}\\
&=C\dfrac tx x^{1/2-b}\left(\dfrac {2\pi} t \right)^{1/2}=C\dfrac{t^b}{x^{2b}}\left(\dfrac{t^{1/2-b}}{x^{1/2-b}}\right)\leq C\dfrac{t^b}{x^{2b}}.\label{l2.3.5}
\end{align}
Finally, we define
$$E_6:=\{w:x/2< w<x \}.$$
Then, in a similar way as the estimation over the sets $E_4$ and $E_5$, we obtain
\begin{align}
\left(\int_{E_6} \dfrac{|e^{i f(w)}-1|^2}{|w|^{1+2b}} dw \right)^{1/2}\leq C\dfrac{t^b}{x^{2b}}.\label{l2.3.6}
\end{align}
Hence, from \eqref{l2.3.1} to \eqref{l2.3.6}, it follows that
$$\mathcal D^b(e^{-it/x})(x)=\left(\int_{\mathbb R} \dfrac{|e^{i f(w)-1}|^2}{|w|^{1+2b}}dw \right)^{1/2}\leq C_b\dfrac{t^b}{x^{2b}}.$$
\textit{Case 2.} $t/x> 6 \pi$.\\
Taking into account the change of variables $w'=1/w$ we obtain
$$\mathcal D^b(e^{-it/x})(x)=\left(\int_{\mathbb R}\dfrac{|e^{if(w)}-1|^2}{|w|^{1+2b}}dw \right)^{1/2}=\left(\int_{\mathbb R}\dfrac{|e^{ig(w)}-1|^2}{|w|^{1-2b}} dw\right)^{1/2},$$
where $g(w):=\frac tx\frac 1{1-xw}$ is an increasing function in $(-\infty,1/x)$ and in $(1/x,\infty)$. Besides
$$\lim_{w\to\pm\infty} g(w)=0,\quad \lim_{w\to \frac1x^-} g(w)=\infty\quad \text{and}\quad \lim_{w\to\frac 1x^+}g(w)=-\infty.$$
Let us define $\tilde w_1$ and $\tilde w_2$ by $g(\tilde w_1):=2\pi$ and $g(\tilde w_2):=-2\pi$. Then
$$\tilde w_1=\dfrac 1x-\dfrac t{2\pi x^2}\quad\text{and}\quad \tilde w_2=\dfrac 1x+\dfrac t{2\pi x^2}.$$
We split $\mathbb R-\{1/x \}$ in several sets $F_i$ as follows:
$$F_1:=(-\infty, \tilde w_1],\quad F_2:=(\tilde w_1,1/x),\quad F_3:=(1/x,\tilde w_2],\quad F_4:=(\tilde w_2,\infty).$$
Then
\begin{align*}
\left(\int_{F_1} \dfrac{|e^{i g(w)}-1|^2}{|w|^{1-2b}} dw \right)^{1/2}&\leq \left( \int_{F_1} \dfrac{|g(w)|^2}{|w|^{1-2b}}dw\right)^{1/2}=\left(\int_{-\infty}^{1/x-t/(2\pi x^2)} \dfrac{t^2/x^2}{|w|^{1-2b}(1-xw)^2} dw\right)^{1/2}\\
&\leq \dfrac tx \left( \int_{t/(2\pi x^2)-1/x}^\infty \dfrac 1{w^{1-2b}x^2 w^2}dw\right)^{1/2}\leq C_b \dfrac t{x^2} \left(\dfrac t{2\pi x^2}-\dfrac 1x \right)^{-1+b}\\
&=C_b \dfrac t{x^2} x^{1-b}\left( \dfrac tx-2\pi\right)^{-1+b}.
\end{align*}
Since $t/x>6\pi$, we have that $t/x-2\pi\sim t/x$, and in consequence
\begin{align}
\left(\int_{F_1} \dfrac{|e^{i g(w)}-1|^2}{|w|^{1-2b}} dw \right)^{1/2}&\leq C_b\dfrac t{x^2}x^{1-b}\dfrac{t^{-1+b}}{x^{-1+b}}=C_b\dfrac{t^b}{x^{2b}}\label{l2.3.7}
\end{align}
On the other hand, taking into account that
$$\dfrac t{2\pi x^2}-\dfrac 1x>\dfrac 2x,$$
we have
\begin{align}
\notag \left(\int_{F_2} \dfrac{|e^{i g(w)}-1|^2}{|w|^{1-2b}} dw \right)^{1/2}&\leq C\left[\left(\int_{1/x-t/(2\pi x^2)}^0 \dfrac 1{|w|^{1-2b}} dw\right)^{1/2}+\left(\int_0^{1/x}\dfrac 1{w^{1-2b}} dw \right)^{1/2} \right]\\
&\leq C \left( \int_0^{t/(2\pi x^2)-1/x} \dfrac 1{w^{1-2b}} dw \right)^{1/2}\leq C_b\left(\dfrac t{2\pi x^2}-\dfrac 1x \right)^b\leq C_b \dfrac {t^b}{x^{2b}}.\label{l2.3.8}
\end{align}
Proceeding in a similar manner as it was done in the sets $F_2$ and $F_1$, respectively, it can be proved that
\begin{align}
\left(\int_{F_i} \dfrac{|e^{i g(w)}-1|^2}{|w|^{1-2b}} dw \right)^{1/2}\leq C_b \dfrac{t^b}{x^{2b}},\quad i=3,4.\label{l2.3.8}
\end{align}
From \eqref{l2.3.7} to \eqref{l2.3.8} we conclude that for $t>0$ and $x>0$,
$$\mathcal D^b(e^{-i\frac tx})(x)=\left(\int_{\mathbb R} \dfrac{|e^{ig(w)}-1|^2}{|w|^{1-2b}} dw\right)^{1/2}\leq C_b\dfrac{t^b}{x^{2b}}.$$
\end{proof}

\begin{lemma}\label{c2.7} Let $\{U_{\pm}(t)\}_{t\in\mathbb R}$  be the group defined by \eqref{group}. For $b\in (0,1/2]$, there exists $C_b>0$ such that for $t\geq 0$ and $f\in Z_{2b,b}$,
\begin{align}
\notag\| &|x|^b U_{\pm}(t) f\|_{L^2_{x}}\\
&\leq C_b [(1+t^{b/3}+t^{\frac13+\frac{2b}9}) \| f\|_{L^2_{x}}+(t^{\frac13+\frac{2b}3}+t^{2b/3}) \|D^{2b}f \|_{L^2_{x}}+ t^b\|D^{-2b}f\|_{L^2_x}+\| |x|^b f \|_{L^2_{x}}].\label{st21}
\end{align}
\end{lemma}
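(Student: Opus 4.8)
The plan is to pass to the frequency side, where the group acts as multiplication by the unimodular symbol $e^{it(\xi^3\pm 1/\xi)}$, and to interpret the weight $|x|^b$ as a fractional derivative in the dual variable; then the Stein calculus of \eqref{intro5}--\eqref{intro6}, fed by the pointwise bounds of Lemmas \ref{lemastein1} and \ref{lemastein2}, does the rest. The $t=0$ case is immediate (the ``$1+$'' in the first coefficient covers it), so I take $t>0$.

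First I would record the conjugation identity: multiplication by $|x|^b$ is intertwined by the Fourier transform with the homogeneous fractional derivative $D^b$ acting on the dual variable, i.e. $\mathcal F\,M_{|x|^b}\,\mathcal F^{-1}=D^b_\xi$, so that by Plancherel $\||x|^b g\|_{L^2_x}=\|D^b_\xi\widehat g\|_{L^2_\xi}$ for every $g$ with $|x|^b g\in L^2$ (this is the fractional analogue of $\widehat{xg}=i\partial_\xi\widehat g$, and is checked directly for even integer $b$ via \eqref{intro4}). Applying it to $g=U_\pm(t)f$, whose Fourier transform is $e^{it\phi(\xi)}\widehat f(\xi)$ with $\phi(\xi)=\xi^3\pm\xi^{-1}$, reduces the statement to an estimate for $\|D^b_\xi(e^{it\phi}\widehat f)\|_{L^2_\xi}$. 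Since $b\in(0,1/2]\subset(0,1)$ and $2n/(n+2b)=2/(1+2b)\le 2$, Theorem B applies in the $\xi$-variable with $p=2$, $n=1$, giving $\|D^b_\xi h\|_{L^2}\lesssim\|h\|_{L^2}+\|\mathcal D^b_\xi h\|_{L^2}$; with $h=e^{it\phi}\widehat f$ the first term is $\|f\|_{L^2}$ because the phase is unimodular.

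For the Stein term I would apply the product inequality \eqref{intro6} to the factorization $e^{it\phi}\cdot\widehat f$; using $|e^{it\phi}|=1$ this yields $\|\mathcal D^b(e^{it\phi}\widehat f)\|_{L^2}\le\|\mathcal D^b\widehat f\|_{L^2}+\|\widehat f\,\mathcal D^b(e^{it\phi})\|_{L^2}$. The first summand, by Theorem B run in reverse, is controlled by $\|f\|_{L^2}+\|D^b_\xi\widehat f\|_{L^2}=\|f\|_{L^2}+\||x|^b f\|_{L^2}$, which produces the last two terms of \eqref{st21}. For the second summand I would factor $e^{it\phi}=e^{it\xi^3}\,e^{\pm it/\xi}$ and invoke the pointwise rule \eqref{intro5} (both factors unimodular) to get $\mathcal D^b(e^{it\phi})(\xi)\le\mathcal D^b(e^{it\xi^3})(\xi)+\mathcal D^b(e^{\pm it/\xi})(\xi)$. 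Inserting Lemma \ref{lemastein1} gives, after integrating $|\widehat f|^2$, the contribution $(t^{b/3}+t^{1/3+2b/9})\|\widehat f\|_{L^2}+(t^{1/3+2b/3}+t^{2b/3})\||\xi|^{2b}\widehat f\|_{L^2}$, and by Plancherel $\||\xi|^{2b}\widehat f\|_{L^2}=\|D^{2b}f\|_{L^2}$; inserting Lemma \ref{lemastein2} gives $t^b\||\xi|^{-2b}\widehat f\|_{L^2}=t^b\|D^{-2b}f\|_{L^2}$. Collecting the five pieces reproduces exactly the right-hand side of \eqref{st21}.

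The arithmetic is routine; the step needing care is the legitimacy of the Fourier passage and of both directions of Theorem B for the functions at hand — in particular that $\widehat f$ and $e^{it\phi}\widehat f$ genuinely lie in $L^2_b$ in $\xi$, which is precisely where the hypotheses $f\in H^{2b}$, $|x|^bf\in L^2$ and $D^{-2b}f\in L^2$ (that is, $f\in Z_{2b,b}$) enter. The one genuine delicacy is the singularity of $e^{\pm it/\xi}$ at $\xi=0$: its Stein derivative blows up like $t^b|\xi|^{-2b}$, and it is exactly the finiteness of $\|D^{-2b}f\|_{L^2}=\||\xi|^{-2b}\widehat f\|_{L^2}$ that absorbs this singularity upon integration against $|\widehat f|^2$.
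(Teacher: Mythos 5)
Your proposal is correct and follows essentially the same route as the paper's own proof: pass to the frequency side via Plancherel so the weight $|x|^b$ becomes $D^b_\xi$ acting on $e^{it(\xi^3\pm 1/\xi)}\widehat f$, use Theorem B with $p=2$ to trade $D^b$ for the Stein derivative $\mathcal D^b$ (in both directions), split the product via \eqref{intro6} and the unimodular phase via \eqref{intro5}, and insert Lemmas \ref{lemastein1} and \ref{lemastein2}, with $\|D^{-2b}f\|_{L^2}$ absorbing the $t^b|\xi|^{-2b}$ singularity exactly as in the paper.
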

\begin{proof}
Taking into account the definition of $D^b$ (see \eqref{intro4}), Plancherel's theorem, the properties \eqref{intro3}, \eqref{intro6} and \eqref{intro5} of the Stein derivative  $\mathcal D^b$, and Lemmas \ref{lemastein1} and  \ref{lemastein2} and using the notation $\text{}^{\vee}$ for the inverse Fourier transform, we have:
\begin{align*}
\| (|x|&^b U_{\pm}(t)f \|_{L^2}\\
=& \| |x|^b (e^{it(\xi^3\pm\frac1{\xi})} \hat f) ^\vee\|_{L^2_{x}}=\| |-x|^b (e^{it(\xi^3\pm\frac1{\xi})} \hat f) ^\wedge (-x)\|_{L^2_{x}}\\
\leq& C\| [D^b(e^{it(\xi^3\pm\frac1{\xi})} \hat f)]^\vee (x) \|_{L^2_{x}}\leq C\left( \| e^{it(\xi^3\pm\frac1{\xi})} \hat f \|_{L^2_{\xi}}+\| \mathcal D^b (e^{it(\xi^3\pm\frac1{\xi})} \hat f) \|_{L^2_{\xi}} \right)\\
\leq& C\left( \|f \|_{L^2_{x}}+\|\hat f \mathcal D^b (e^{it(\xi^3\pm\frac1{\xi})}) \|_{L^2_{\xi}}+\|\mathcal D^b (\hat f) e^{it(\xi^3\pm\frac1{\xi})}\|_{L^2_{\xi}} \right)\\
\leq& C \left( \|f \|_{L^2_{x}}+\|\hat f ( \mathcal D^b (e^{it\xi^3})+\mathcal D^b (e^{\pm i\frac{t}{\xi}}) )\|_{L^2_{\xi}}+\|\mathcal D^b (\hat f) \|_{L^2_{\xi}} \right)\\
\leq & C \left( \|f \|_{L^2_{x}}+C_b \|\hat f (t^{b/3}+t^{\frac13+\frac{2b}9}+(t^{\frac13+\frac{2b}3}+t^{\frac{2b}3})|\xi|^{2b}+t^b|\xi|^{-2b})\|_{L^2_{\xi}}+C(\|\hat f\|_{L^2_{\xi}}+\|D^b\hat f\|_{L^2_{\xi}}) \right)\\
\leq& C\|f\|_{L^2_x}+C_b (t^{b/3}+t^{\frac13+\frac{2b}9}) \|f \|_{L^2_x}+C_b(t^{\frac13+\frac{2b}3}+t^{2b/3}) \| D^{2b} f\|_{L^2_x}+ C_bt^b\|D^{-2b}f\|_{L^2_x}+C\| |x|^b f \|_{L^2_x} .
\end{align*}
\end{proof}

\section{Proof of the main theorem}

\begin{proof}
We consider the equivalent integral formulation of the IVP \eqref{O}
\begin{align}
u(t)=U_{\pm}(t)u_0- \int_0^t U_{\pm}(t-t')(u\partial_x u)(t') dt'.\label{proint}
\end{align}
 Let us define the integral operator
\begin{align}
\Psi(v)(t):=U_{\pm}(t)u_0- \int_0^t U_{\pm}(t-t')(v\partial_x v)(t') dt'.\label{mt3.2}
\end{align}
Proceeding as in \cite{LM2006}, let us define, for $T>0$, the metric space
\begin{align}
X_T:=\{v\in C([0,T];X_s):  \n v \n <\infty \}, \label{mt3.4}
\end{align}
where
\begin{align}
\notag \n v \n :=& \| v\|_{L_T^\infty H^s(\mathbb R)}+ \|\partial_x^{-1}v\|_{L_T^{\infty}L_x^2}+
\|\partial_xv\|_{L^4_T L^\infty_{x}}+\|D^s\partial_xv\|_{L_x^\infty L_T^2}+\|v \|_{L^2_{x}L^\infty_{T}}+\|v \|_{L^\infty_T L^2(|x|^s dx)}\\
&\equiv \sum_{i=1}^{6} n_i(v).\label{mt3.5}
\end{align}
(When $s=1$ in \eqref{mt3.5} we change $D^s$  by $\partial_x$)

For $a>0$, let $X_T^a$ be the closed ball in $X_T$ defined by
\begin{align}
X_T^a:= \{ v\in X_T: \n v \n\leq a  \}. \label{mt3.6}
\end{align}

We will prove that there exist $T>0$ and $a>0$ such that the operator $\Psi:X_T^a\to X_T^a$ is a contraction.

From now on we will suppose as in \cite{LM2006} that we are working with the group $\{U_+(t)\}$, being the other case similar.

In \cite{LM2006} it was proven that
\begin{align}\sum_{i=1}^{5} n_i(\Psi(v))\leq C(1+T)^{\frac12}\|u_0\|_{X_s}+CT^{\frac12}((1+T)^{\frac12}(1+T^{\frac14}+T^{\frac12})+(1+T^{\frac14})(1+T^{\frac12}))\n v\n^2\,.\label{LM}
\end{align}
Let us estimate $n_6(\Psi(v))$. For $t\in[0,T]$ and $\frac34<s\leq 1$,
applying Lemma \ref{c2.7} in section 2.2 with $b:=\frac{s}2$ we have that
\begin{align}
\notag\|\Psi(v)(t) \|&_{L^2(|x|^s dx)}\\
\notag\leq& \| U_+(t)u_0\|_{L^2(|x|^s dx)}+C\int_0^t\| U_+(t-t')(vv_x)(t') \|_{L^2(|x|^s dx)}dt'\\
\notag\leq& C_s \left[(1+t^{s/6}+t^{\frac13+\frac{s}9})\| u_0\|_{L^2_{x}}+(t^{1/3+s/3}+t^{s/3}) \|D^s u_0 \|_{L^2_{x}}+t^{\frac{s}2}\|D^{-s}u_0\|_{L^2_x}+\||x|^{\frac{s}2}u_0\|_{L^2_x}\right]\\
\notag&+C\int_0^t  C_s (1+(t-t')^{s/6}+(t-t')^{(\frac13+\frac{s}9)})\| (v v_x)(t')\|_{L^2_x}dt'\\
\notag&+C\int_0^tC_s((t-t')^{1/3+s/3}+(t-t')^{s/3})  \| D^s(vv_x)(t')\|_{L^2_x}dt'\\
\notag&+C\int_0^tC_s[(t-t')^{\frac{s}2}\|D^{-s}(vv_x)(t')\|_{L^2_x}+\||x|^{s/2}(vv_x)(t') \|_{L^2_x} ] dt'\\
\notag\leq& C_s \left[(1+T^{\frac13+\frac{s}3})\left(\| u_0\|_{L^2_{x}}+ \|D^s u_0 \|_{L^2_{x}}+\|D^{-s}u_0\|_{L^2_x}\right)+\||x|^{\frac{s}2}u_0\|_{L^2_x}\right]\\
\notag&+C_s(1+T^{\frac13+\frac{s}3})\int_0^T [ \| (v v_x)(t')\|_{L^2_x}+\| D^s(vv_x)(t')\|_{L^2_x}+\|D^{-s}(vv_x)(t')\|_{L^2_x}]dt'\\
&+C\int_0^T\||x|^{s/2}(vv_x)(t') \|_{L^2_x}  dt'\,.\label{LM1}
\end{align}
Taking into account that  for $0<s\leq 1$, it follows that
\[\|D^{-s}f\|_{L^2_x}\leq\|\partial_x^{-1}f\|_{L^2_x}+\|f\|_{L^2_x}\,,\]
we can conclude from \eqref{LM1} that
\begin{align}
\notag\|\Psi(v)(t) \|&_{L^2(|x|^s dx)}\\
\notag\leq& C_s\left[ (1+T^{\frac13+\frac{s}3})\|u_0\|_{X_s}+\||x|^{\frac{s}2}u_0\|_{L^2_x}\right]\\
&+C_s(1+T^{\frac13+\frac{s}3})\int_0^T [ \| (v v_x)(t')\|_{X_s}dt'+C\int_0^T\||x|^{s/2}(vv_x)(t') \|_{L^2_x}  dt'.\label{LM2}
\end{align}
Since
\begin{align*}\int_0^T\||x|^{s/2}(vv_x)(t') \|_{L^2_x}  dt'&\leq T^{\frac12}\||x|^{s/2}(vv_x) \|_{L^2_TL^2_x}\leq T^{\frac12}\||x|^{\frac{s}2}v\|_{L^{\infty}_TL^2_x}\|v_x\|_{L^2_TL^{\infty}_x}\\
&\leq T^{\frac12}\||x|^{\frac{s}2}v\|_{L^{\infty}_TL^2_x}T^{\frac14}\|v_x\|_{L^4_TL^{\infty}_x}\,,
\end{align*}
from \eqref{LM2} it follows that
\begin{align}
\notag n_6(\Psi(v)) \leq &C_s\left[ (1+T^{\frac13+\frac{s}3})\|u_0\|_{X_s}+\||x|^{\frac{s}2}u_0\|_{L^2_x}\right]\\
\notag&+C_s(1+T^{\frac13+\frac{s}3})\int_0^T [ \| (v v_x)(t')\|_{X_s}dt'+C T^{\frac34}\||x|^{\frac{s}2}v\|_{L^{\infty}_TL^2_x}\|v_x\|_{L^4_TL^{\infty}_x}\\
\notag\leq&C_s\left[ (1+T^{\frac13+\frac{s}3})\|u_0\|_{X_s}+\||x|^{\frac{s}2}u_0\|_{L^2_x}\right]\\
&+C_s(1+T^{\frac13+\frac{s}3})\int_0^T [ \| (v v_x)(t')\|_{X_s}dt'+C T^{\frac34}\n v\n^2.\label{LM3}
\end{align}
Using Cauchy-Schwarz's inequality and Leibniz's rule (Lemma \ref{l2.4}) in \cite{LM2006} it was proved that
\[\int_0^T [ \| (v v_x)(t')\|_{X_s}dt'\leq C(T+T^{\frac34}+T^{\frac12})\n v\n^2\,.\]
Then from \eqref{LM3} we obtain
\begin{align}
\notag n_6(\Psi(v)) \leq &C_s\left[ (1+T^{\frac13+\frac{s}3})\|u_0\|_{X_s}+\||x|^{\frac{s}2}u_0\|_{L^2_x}\right]\\
&+C_s(1+T^{\frac13+\frac{s}3})(T+T^{\frac34}+T^{\frac12})\n v\n^2.\label{LM4}
\end{align}

From estimates \eqref{LM} and \eqref{LM4}, taking into account that  $s>\frac34$, we conclude that
\begin{align}
\n \Psi(v)\n \leq C_s\left[ (1+T^{\frac13+\frac{s}3})\|u_0\|_{X_s}+\||x|^{\frac{s}2}u_0\|_{L^2_x}\right]
+C_sT^{\frac12}(1+T^{\frac13+\frac{s}3})(1+T^{\frac14}+T^{\frac12}) \n v \n^2.\label{mt3.32}
\end{align}

If we choose
$$a:= 2C_s\left[ (1+T^{\frac13+\frac{s}3})\|u_0\|_{X_s}+\||x|^{\frac{s}2}u_0\|_{L^2_x}\right],$$
and $T>0$ such that
$$C_s T^{\frac12}(1+T^{\frac13+\frac{s}3})(1+T^{\frac14}+T^{\frac12})a<1/2,$$
it can be seen that $\Psi$ maps $X^a_T$ into itself. Moreover, for $T$ small enough, $\Psi:X_T^a\to X_T^a$ is a contraction. In consequence, there exists a unique $u\in X_T^a$ such that $\Psi(u)=u$. In other words, for $t\in [0,T]$,
$$u(t)=U_+(t)u_0- \int_0^t U_+(t-t')(u\partial_x u)(t') dt',$$
i.e., the IVP \eqref{O} has a unique solution in $X_T^a$.

Using standard arguments, it is possible to show that for any $T'\in(0,T)$ there exists a neighborhood $V$ of $v_0$ in $Z_{s,s/2}$ such that the map $\tilde u_0\to \tilde v$ from $V$ into the metric space $X_{T'}$ is Lipschitz. Then the assertion of Theorem 1.1 follows if we take
$$Y_T:=\{ u\in C([0,T];Z_{s,s/2}): \n v\n <\infty \}.$$
\end{proof}

\end{document}